\title{Sums of certain fractional parts}
\author{Olivier Bordell\`{e}s}
\address{2 all\'{e}e de la combe \\ 43000 Aiguilhe \\ France}
\email{borde43@wanadoo.fr}
\date{}
\dedicatory{}
\theoremstyle{plain}
\newtheorem{theorem}{Theorem}
\newtheorem{lemma}[theorem]{Lemma}
\theoremstyle{definition}
\theoremstyle{remark}
\newcommand{\Z}{\mathbb {Z}}
\newcommand{\R}{\mathbb {R}}
\newcommand{\C}{\mathbb {C}}
\DeclareMathOperator{\card}{Card}
\begin{document}

\begin{abstract}
In this note, an upper bound for the sum of fractional parts of certain smooth functions is established. Such sums arise naturally in numerous problems of analytic number theory. The main feature is here an improvement of the main term due to the use of Weyl's bound for exponential sums and a device used by Popov.
\end{abstract}

\subjclass[2010]{Primary 11L07; Secondary 11L15, 11J54.}
\keywords{Weyl's and van der Corput's exponential sums, fractional part.}

\maketitle

\thispagestyle{myheadings}
\font\rms=cmr8 
\font\its=cmti8 
\font\bfs=cmbx8

\section{Introduction and main result}

\noindent
Le $\{x\}$ be the fractional part of $x \in \R$ and $\psi(x) := \{x\} - \frac{1}{2}$ be the first Bernoulli function. Sums of the shape
\begin{equation}
   \sum_{N < n \leqslant 2N} \psi(f(n)) \label{eq:sum}
\end{equation}
where $N$ is a large number and $f$ is a smooth function, are of great importance in analytic number theory (see \cite{borl,gra,mon} for instance). A large amount of problems are reduced to obtaining a non-trivial bound for the sum \eqref{eq:sum}, such as, among others, the Dirichlet divisor problem, the Gauss circle problem, the problem of the gaps between $k$-free numbers or the distribution of squarefull numbers. 

The general strategy is to use a truncated version of the expansion of $\psi$ in Fourier series, providing the inequality
\begin{equation}
   \sum_{N < n \leqslant 2N} \psi(f(n)) \ll \frac{N}{L} + \sum_{\ell \leqslant L} \frac{1}{\ell} \left | \sum_{N < n \leqslant 2N} e(\ell f(n)) \right| \label{eq:Erdos}
\end{equation}
where $e(x) := e^{2 i \pi x}$ and $L \geqslant 1$ is any integer parameter to be chosen optimally. The problem is henceforth reduced to estimating exponential sums, for which several methods have been developed by Weyl, van der Corput and Vinogradov. For instance, when $f \in C^k \left[ N,2N \right]$ satisfies $\left | f^{(k)} (x) \right | \asymp \lambda_k <1$, van der Corput's estimate (see \cite[Theorem~2.8]{gra}) and the inequality \eqref{eq:Erdos} yield the bound 
\begin{equation}
   \sum_{N < n \leqslant 2N} \psi(f(n)) \ll_k N \lambda_k^{\frac{1}{2^k-1}} + N^{1-2^{1-k}} \log N + N^{1-2^{3-k} + 2^{4-2k}} \lambda_k^{-2^{1-k}} \label{eq:vdc}
\end{equation}
when $k \geqslant 2$, the term $N^{1-2^{1-k}} \log N$ being removed in the case $k=2$. When $k \in \{2,3,4\}$, this respectively gives 
\begin{eqnarray*}
   & & \sum_{N < n \leqslant 2N} \psi(f(n)) \ll N \lambda_2^{1/3} + \lambda_2^{-1/2}, \\
   & & \\
   & & \sum_{N < n \leqslant 2N} \psi(f(n)) \ll N \lambda_3^{1/7} + N^{3/4} \log N + N^{1/4} \lambda_3^{-1/4}, \\
   & & \\
   & & \sum_{N < n \leqslant 2N} \psi(f(n)) \ll N \lambda_4^{1/15} + N^{7/8} \log N + N^{9/16} \lambda_4^{-1/8}.
\end{eqnarray*}
The term $N \lambda_k^{\frac{1}{2^k-1}}$ is usually called the \textit{main term}, the other two terms being the \textit{secondary terms}. For monomial functions, i.e. functions $f \in C^\infty \left[ N,2N \right]$ such that $\left | f^{(k)} (x) \right | \asymp TN^{-k}$ for some $T \geqslant 1$ and for any positive integer $k$, van der Corput's method of exponent pairs provides better results (see \cite[Lemma~4.3]{gra} or \cite[Corollary~6.35]{borl}). 

Recently, using new bounds given in \cite[Theorem~1.2]{woo} for the number $J_{s,k}(X)$ of integral solutions of the system
$$x_1^j + \dotsb + x_s^j = y_1^j + \dotsb + y_s^j \quad \left( 1 \leqslant j \leqslant k \right) $$
with $1 \leqslant x_i,y_i \leqslant X$ ($1 \leqslant i \leqslant s$), some improvements in exponential sums have appeared in the literature (see \cite{rob,hea}). If we use the main result in \cite{hea} combined with \eqref{eq:Erdos}, we obtain
\begin{equation}
   N^{-\varepsilon} \sum_{N <n \leqslant 2N} \psi \left( f(n) \right) \ll _{\varepsilon,k} N \lambda_k^{\frac{1}{k^2-k+1}} + N^{1 - \frac{1}{k(k-1)}} + N^{1 - \frac{2}{k(k-1)}} \lambda_k^{-\frac{2}{k^2(k-1)}} \label{eq:heath}
\end{equation}
where $k \in \Z_{\geqslant 3}$ and $f \in C^k \left[ N,2N\right]$ is such that there exists $\lambda_k \in \left( 0,1 \right)$ such that $\left | f^{(k)} \right | \asymp \lambda_k$. This improves on the main term of \eqref{eq:vdc} as soon as $k \geqslant 4$ and gives the same exponent when $k=3$:
\begin{eqnarray*}
   & & N^{-\varepsilon} \sum_{N < n \leqslant 2N} \psi(f(n)) \ll_{\varepsilon} N \lambda_3^{1/7} + N^{5/6} + N^{2/3} \lambda_3^{-1/9}, \\
   & & \\
   & & N^{-\varepsilon} \sum_{N < n \leqslant 2N} \psi(f(n)) \ll_{\varepsilon} N \lambda_4^{1/13} + N^{11/12} + N^{5/6} \lambda_4^{-1/24}.
\end{eqnarray*}

Any improvement of \eqref{eq:vdc} or \eqref{eq:heath} when $k \in \{2,3,4\}$ may lead to new results in the aforementioned problems. The main purpose of this note is to improve the main term in the cases $k \geqslant 2$ for \eqref{eq:vdc} and $k \in \{2,3,4,5 \}$ for \eqref{eq:heath}. To do this, we use Weyl's differencing method and add a device due to Popov \cite{pop}, also used in \cite{fom} to estimates the sums 
$$\sum_{N < n \leqslant 2N} \psi(P(n))$$
where $P(x)$ is a polynomial of degree $2$ or $3$ with small positive leading coefficient. The method was then generalized in \cite{bor} to any polynomial of degree $\geqslant 2$, and we use here the Weyl's schift to extend the results to smooth functions. Unfortunately, as often in exponential sums estimates, the secondary terms remain too weak to be really efficients in practice. Nevertheless, the result below seems to be new and we think that it may be of interest.

\begin{theorem}
\label{eq:th}
Let $k,N \in \Z_{\geqslant 2}$, $f \in C^{k+1} \left[ N,2N \right]$ such that there exist $\lambda_k, \lambda_{k+1}, s_k > 0$ and $c_k,c_{k+1}\geqslant 1$ such that, for any $x \in \left[ N,2N \right]$ and any $j \in \{k,k+1\}$
$$\lambda_j \leqslant \left| f^{(j)}(x) \right| \leqslant c_j \lambda_j \quad \text{with} \quad \lambda_k= s_k N\lambda_{k+1}.$$
Define $d_k:=2^k(k+1) + 2k$. Then, for any $\varepsilon > 0$
$$N^{-\varepsilon} \sum_{N < n \leqslant 2N} \psi \left( f(n) \right) \ll_{k,\varepsilon} N \lambda_k^{2^{1-k}} + N \lambda_k^{\frac{2^{1-k}}{k}}\left( N^k \lambda_k  \right)^{-\frac{2}{k d_k}} + N \left( N^k \lambda_k \right)^{-\frac{2}{d_k}}.$$
\end{theorem}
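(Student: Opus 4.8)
The plan is to combine the truncated Fourier expansion of $\psi$ with Weyl's differencing and then graft on Popov's shift to recover the single-sum main term. First I would start from \eqref{eq:Erdos}, which reduces the problem to estimating the exponential sums $S(\ell) := \sum_{N < n \leqslant 2N} e(\ell f(n))$ for $1 \leqslant \ell \leqslant L$, with $L$ a free parameter. The $k$-th derivative of the phase $\ell f$ is $\asymp \ell \lambda_k$, so Weyl's method is the natural tool: differencing $k-1$ times linearises the phase, and the resulting $(k-1)$-fold difference $\Delta_{h_1}\cdots\Delta_{h_{k-1}}(\ell f)$ has derivative $\asymp \ell \lambda_k |h_1 \cdots h_{k-1}|$. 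Here the role of the extra hypothesis is transparent: the bound $\lambda_{k+1} \leqslant |f^{(k+1)}| \leqslant c_{k+1}\lambda_{k+1}$ together with $\lambda_k = s_k N \lambda_{k+1}$ guarantees that $f^{(k)}$ is slowly varying on $[N,2N]$ and that $f$ is genuinely of monomial type at this scale, so that after differencing the second derivative of the linearised phase is comparable to its first derivative and the first-derivative test applies cleanly. This produces a Weyl bound of the shape $|S(\ell)| \ll_{k,\varepsilon} N^{1+\varepsilon}\bigl((\ell\lambda_k)^{2^{1-k}} + R(N,\ell\lambda_k)\bigr)$, where $R$ collects the secondary terms in $N$ and $\ell\lambda_k$, and whose leading exponent $2^{1-k}$ is exactly the one appearing in the main term of the statement.

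The difficulty is that feeding this bound naively into \eqref{eq:Erdos} is lossy: since $\sum_{\ell \leqslant L} \ell^{-1}(\ell\lambda_k)^{2^{1-k}} \asymp \lambda_k^{2^{1-k}} L^{2^{1-k}}$ grows with $L$, balancing against $N/L$ would only yield a main term $N\lambda_k^{1/(2^{k-1}+1)}$, weaker than the claimed $N\lambda_k^{2^{1-k}}$. This is precisely where Popov's device enters. Following \cite{pop,fom,bor}, I would introduce a shift and rewrite the frequency sum so that its main contribution collapses to the single-sum estimate $N\lambda_k^{2^{1-k}}$, the cost of the shift being an error $\ll N/q$ controlled by a new length parameter $q$, while the genuinely oscillatory part only feeds the Weyl \emph{secondary} terms. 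The Weyl shift is what lets this polynomial device of \cite{bor} survive for a smooth $f$: the Taylor expansion of $f$ across the shift is controlled by $f^{(k+1)}$, and the hypothesis $\lambda_k = s_k N \lambda_{k+1}$ is exactly the compatibility needed for the shifted and unshifted phases to have comparable higher derivatives.

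With the main term isolated, what remains is to optimise the two free parameters, the truncation length $L$ and the shift length $q$, so as to minimise the sum of $N/q$, the truncation error, and the Weyl secondary contributions. Carrying out this balancing is where the quantity $d_k = 2^k(k+1) + 2k$ should emerge: the factor $2^{k-1}$ from the $k-1$ Weyl differencings, the factor $k$ from the scale relation between $\lambda_k$ and $N^k\lambda_k$, and the linear-in-$k$ corrections coming from the shift combine additively, and solving the resulting balance equations yields the two secondary terms $N\lambda_k^{2^{1-k}/k}(N^k\lambda_k)^{-2/(kd_k)}$ and $N(N^k\lambda_k)^{-2/d_k}$, with $N^k\lambda_k$ playing the role of the monomial size.

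I expect the main obstacle to be the transplantation of Popov's device from polynomials to a general smooth $f$: one must verify that the shift argument of \cite{bor}, which relies on the exact algebraic structure of $P(n+q)-P(n)$, continues to work when $P$ is replaced by a $C^{k+1}$ function, using only the two-sided derivative bounds and the relation $\lambda_k = s_k N\lambda_{k+1}$ to control the error in the Taylor expansion and to keep the main term at the full Weyl exponent $2^{1-k}$. A secondary, purely technical, difficulty is the careful estimation of the multiple sum $\sum_{h_1,\ldots,h_{k-1}} \min\bigl(N,(\ell\lambda_k|h_1\cdots h_{k-1}|)^{-1}\bigr)$ arising from the first-derivative test, followed by the multi-parameter optimisation needed to obtain the precise exponents in terms of $d_k$ rather than a weaker admissible value.
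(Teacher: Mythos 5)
Your outline shares most of its skeleton with the paper's proof: the paper likewise starts from \eqref{eq:Erdos}, uses the Weyl shift of Lemma~\ref{lem:Weyl} together with a Taylor expansion whose remainder $r_{n,k}(h)$ is controlled through $\lambda_{k+1}$ and the relation $\lambda_k=s_kN\lambda_{k+1}$, keeps the main term at the full Weyl exponent $2^{1-k}$ by Popov's device, and extracts $d_k$ from the final balancing; your diagnosis of the $L^{2^{1-k}}$ loss in the naive $\ell$-summation is also exactly right. But there is a genuine gap in how you propose to execute the middle of the argument, and it concerns precisely what you flag as the ``main obstacle.'' In your first and last paragraphs you plan to difference the smooth phase $\ell f$ directly $k-1$ times over the full range and then apply the first-derivative (Kusmin--Landau) test, producing sums $\sum_{h_1,\dotsc,h_{k-1}}\min\bigl(N,(\ell\lambda_k|h_1\cdots h_{k-1}|)^{-1}\bigr)$. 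That route cannot support Popov's device: the first-derivative test on a continuously varying derivative yields a $\min$ with no distance-to-nearest-integer structure, whereas Popov's device operates on sums of $\min\bigl(H,\|h\ell\alpha_{n,k}\|^{-1}\bigr)$, i.e.\ it needs the arithmetic $\|\cdot\|$ that only Weyl's inequality for an \emph{exact polynomial} phase provides. This is why in the paper the Weyl shift comes \emph{first}: it converts $\sum_{N<n\leqslant 2N}e(\ell f(n))$ into short sums $\sum_{h\leqslant H}e(\ell g_n(h))$ over the genuine Taylor polynomial $g_n(h)$ of degree $k$ with leading coefficient $\alpha_{n,k}=\frac{1}{k!}\bigl|f^{(k)}(n)\bigr|\asymp\lambda_k$, the remainder $r_{n,k}$ being removed by partial summation under the constraint $L\leqslant H^{-k-1}N\lambda_k^{-1}$ --- which ties $L$ to $H$ and is where the hypothesis $\lambda_k=s_kN\lambda_{k+1}$ is actually used; only then is Weyl's polynomial bound applied to the $h$-sum. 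As written, your plan would reproduce only the van der Corput bound \eqref{eq:vdc}, not Theorem~\ref{eq:th}.

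A second, related, inaccuracy: Popov's device is not itself a shift with cost $\ll N/q$. After an application of H\"{o}lder's inequality with exponent $\frac{2^k}{2^k-2}$ (a step your sketch omits but which is needed to bring the $\ell$-sum inside the $2^{1-k}$-th power), the device is the purely arithmetic observation that under the substitution $m=h\ell$ the weight satisfies $\frac{1}{\ell}\leqslant\frac{k!H^{k-1}}{m}<\frac{k!}{j}$ on the block $jH^{k-1}<m\leqslant(j+1)H^{k-1}$, so the double sum over $(\ell,h)$ collapses, via the divisor bounds $\sigma(m)/m\ll\log\log m$ and $\tau(m)\ll m^{\varepsilon}$ and a blockwise application of Lemma~\ref{lem:sum}, to the single-frequency quantity $\bigl(H^k\alpha_{n,k}+H^{k-1}+\alpha_{n,k}^{-1}\bigr)^{2^{1-k}}$ at the cost of logarithms and $N^{\varepsilon}$ only. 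The cost of the shift is $+H$ per frequency, i.e.\ $H\log L$ in total, and the optimisation is effectively one-parameter rather than two: $L=\bigl\lfloor H^{-k-1}N\lambda_k^{-1}\bigr\rfloor$ is forced by the remainder constraint, after which $H=4\bigl\lfloor(N^{2^k}\lambda_k^{-(2^k+2)})^{1/d_k}\bigr\rfloor$ balances $H^{k+1}\lambda_k$ against $NH^{-k2^{1-k}}\lambda_k^{-2^{1-k}}$, yielding the three asserted terms.
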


Note that, if $N^{k-1} \lambda_k^{2+2^{1-k}} \leqslant 1$, then the $2$nd term is absorbed by the $3$rd one. To compare with \eqref{eq:vdc} and \eqref{eq:heath} in the cases $k \in \{2,3,4\}$, Theorem~\ref{eq:th} respectively gives
\begin{eqnarray*}
   & & N^{-\varepsilon} \sum_{N < n \leqslant 2N} \psi \left( f(n) \right) \ll_{\varepsilon} N \lambda_2^{1/2} + N^{7/8} \lambda_2^{3/16} + N^{3/4} \lambda_2^{-1/8}, \\
   & & \\
   & & N^{-\varepsilon} \sum_{N < n \leqslant 2N} \psi \left( f(n) \right) \ll_{\varepsilon} N \lambda_3^{1/4} + N^{18/19} \lambda_3^{5/76} + N^{16/19} \lambda_3^{-1/19}, \\
   & & \\
   & & N^{-\varepsilon} \sum_{N < n \leqslant 2N} \psi \left( f(n) \right) \ll_{\varepsilon} N \lambda_4^{1/8} + N^{43/44} \lambda_4^{9/352} + N^{10/11} \lambda_4^{-1/44}.
\end{eqnarray*}

\section{Technical lemmas}

\begin{lemma}
\label{lem:sum}
Let $M \in \Z_{\geqslant 0}$, $N \in \Z_{\geqslant 1}$, $H \in \Z_{\geqslant 4}$ and $\alpha > 0$. Then
$$\sum_{M < n \leqslant M+N} \min \left( H , \frac{1}{\| n \alpha \|} \right) \ll HN \alpha + \left( N + \alpha^{-1} \right) \log H + H.$$
\end{lemma}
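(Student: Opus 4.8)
The plan is to bound the sum $S:=\sum_{M<n\leqslant M+N}\min\!\left(H,\tfrac{1}{\|n\alpha\|}\right)$ by splitting the range of $n$ into consecutive blocks on which $n\alpha$ traverses a full period. Writing $Q:=\lceil\alpha^{-1}\rceil$, I would partition the interval $(M,M+N]$ into $O\!\left(N\alpha+1\right)$ sub-blocks each of length at most $Q$, so that on each block the fractional parts $\{n\alpha\}$ are spaced roughly $\alpha$ apart and the values $\|n\alpha\|$ take distinct values that are essentially the multiples $0,\alpha,2\alpha,\dots$ up to $\tfrac12$. This is the standard van der Corput-type device for sums of $\min(H,1/\|n\alpha\|)$ and it reduces everything to estimating the contribution of a single generic block.

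On a single block I would control the sum $\sum\min\!\left(H,\tfrac{1}{\|n\alpha\|}\right)$ as follows. The key point is that within a block of length $Q\approx\alpha^{-1}$, the points $\{n\alpha\}$ are well separated, so at most $O(1)$ of them can be very close to an integer; for those I use the crude bound $H$ (this is exactly why the cutoff $H$ is imposed and where the term $H$ in the statement comes from). For the remaining points, the values $\|n\alpha\|$ are bounded below by roughly $\alpha,2\alpha,\dots$, so $\sum 1/\|n\alpha\|\ll \alpha^{-1}\sum_{1\leqslant j\leqslant 1/(2\alpha)} \tfrac{1}{j}\ll \alpha^{-1}\log H$, using $H$ to cut off the harmonic sum at the scale below which the crude bound $H$ is used instead. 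Thus each block contributes $O\!\left(H+\alpha^{-1}\log H\right)$.

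Summing over the $O(N\alpha+1)$ blocks, the $H$ from each of the genuinely-spaced blocks accumulates to $O\!\left(HN\alpha\right)$, while the $\alpha^{-1}\log H$ contributions are the delicate ones: one must not multiply $\alpha^{-1}\log H$ by the number of blocks naively, since the harmonic-type sum should really be counted once per fractional-part value rather than once per block. I would instead argue that across all blocks the distinct contributing residues give a total of $O\!\left((N\alpha+1)\alpha^{-1}\log H\right)=O\!\left((N+\alpha^{-1})\log H\right)$, together with the genuine $HN\alpha$ from the few near-integer points (one per block) and a single leftover $H$ from the final incomplete block. Combining these three contributions yields exactly
$$S\ll HN\alpha+\left(N+\alpha^{-1}\right)\log H+H.$$

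The main obstacle is the careful accounting in the last step: one must be sure that the logarithmic factor is attached to $N+\alpha^{-1}$ and not to the larger quantity $HN\alpha$, which requires treating separately the few exceptional near-integer points (contributing the $HN\alpha$ term through the cutoff $H$ applied once per block) from the bulk well-separated points (contributing the $(N+\alpha^{-1})\log H$ term). The hypothesis $H\geqslant 4$ guarantees $\log H>0$ and ensures the harmonic sum is correctly truncated, so no degenerate cases arise.
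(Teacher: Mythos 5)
Your argument is correct in substance, but be aware that the paper itself offers nothing to compare it against: its entire proof of this lemma is the one-line citation to \cite[Lemma~3.2]{bor}, so what you have written is a self-contained reconstruction of the standard proof of such lemmas rather than a match with, or departure from, anything actually displayed in this paper. The reconstruction is sound: for $\alpha \geqslant 1/2$ the bound is trivial since the left-hand side is at most $HN \ll HN\alpha$, and for $\alpha < 1/2$ your splitting of $(M,M+N]$ into $O(N\alpha+1)$ blocks of length $\lceil \alpha^{-1} \rceil$ works precisely because within one block the points $\{n\alpha\}$ form an exact arithmetic progression of step $\alpha$ on the circle, wrapping around only $O(1)$ times --- no Diophantine hypothesis on $\alpha$ is needed, since the bound is expressed in terms of $\alpha$ itself rather than a rational approximation denominator. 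Two blemishes are worth flagging. First, your displayed chain $\sum 1/\|n\alpha\| \ll \alpha^{-1} \sum_{1 \leqslant j \leqslant 1/(2\alpha)} 1/j \ll \alpha^{-1}\log H$ is false as literally written: the harmonic sum there is of size $\log(1/\alpha)$, which may well exceed $\log H$. The correct accounting, which your parenthetical remark does gesture at, is to bound the $O\left(1/(H\alpha)+1\right)$ points per block with $\|n\alpha\| \leqslant 1/H$ by $H$ each (contributing $O(\alpha^{-1}+H)$ per block) and to sum $1/(j\alpha)$ only over $1/(H\alpha) \leqslant j \leqslant 1/(2\alpha)$, which is where the factor $\log H$ genuinely comes from; this step should be made explicit. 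Second, your stated worry about ``not multiplying $\alpha^{-1}\log H$ by the number of blocks naively'' is unfounded: the naive product $(N\alpha+1)\left(H+\alpha^{-1}\log H\right) = HN\alpha + H + \left(N+\alpha^{-1}\right)\log H$ is exactly the asserted bound, so no delicate redistribution of residues across blocks is required, and the final paragraph of your proposal could be deleted without loss.
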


\begin{proof}
This is \cite[Lemma~3.2]{bor}.
\end{proof}

\begin{lemma}[Weyl's shift]
\label{lem:Weyl}
Let $N,N_1 \in \Z_{\geqslant 1}$ such that $N < N_1 \leqslant 2N$ and $a_{N+1},\dotsc,a_{N_1} \in \C$ satisfying $\left | a_n \right | \leqslant 1$. Then, for any $H \in \left\lbrace 1,\dotsc,N-1 \right\rbrace$
$$\left | \sum_{N < n \leqslant N_1} a_n \right | \leqslant \frac{1}{H} \left | \sum_{N < n \leqslant N_1-H} \ \sum_{h \leqslant H} a_{n+h} \right | + H.$$
\end{lemma}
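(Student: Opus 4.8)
The plan is to exploit the shift-invariance of the inner sum by averaging $S := \sum_{N < n \leqslant N_1} a_n$ over the $H$ translates that appear on the right-hand side, and to show that each individual translate differs from $S$ only by a controlled boundary contribution.

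First I would fix $h \in \{1,\dots,H\}$ and reindex the shifted sum via $m = n+h$, writing
$$\sum_{N < n \leqslant N_1 - H} a_{n+h} = \sum_{N + h < m \leqslant N_1 - H + h} a_m.$$
Comparing the interval $(N+h,\, N_1 - H + h]$ with the interval $(N,\, N_1]$ that defines $S$, one checks that the former is contained in the latter, and that the difference consists of the two end segments $(N,\, N+h]$ and $(N_1 - H + h,\, N_1]$, which contain $h$ and $H-h$ integers respectively. Since $|a_m| \leqslant 1$, the triangle inequality gives
$$\left| S - \sum_{N + h < m \leqslant N_1 - H + h} a_m \right| \leqslant h + (H - h) = H$$
for every such $h$. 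The hypotheses $1 \leqslant H \leqslant N-1$ and $N < N_1 \leqslant 2N$ guarantee that all indices invoked lie in $\{N+1,\dots,N_1\}$, so no undefined $a_m$ is ever used.

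Next I would sum this estimate over $h = 1,\dots,H$ and apply the triangle inequality once more, obtaining
$$\left| H S - \sum_{h \leqslant H} \ \sum_{N + h < m \leqslant N_1 - H + h} a_m \right| \leqslant H^2.$$
Reversing the reindexing $m = n+h$ turns the double sum on the left back into $\sum_{N < n \leqslant N_1 - H} \sum_{h \leqslant H} a_{n+h}$, which is exactly the quantity in the statement. A final use of the triangle inequality followed by division by $H$ then yields the claimed bound.

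The argument is essentially bookkeeping, so there is no serious analytic obstacle; the only point demanding care is the accounting of the boundary segments, namely verifying that the total number of ``lost'' terms summed over all shifts is precisely $\sum_{h \leqslant H} H = H^2$ and not larger, and that the index ranges never escape $\{N+1,\dots,N_1\}$. An equivalent route I would keep in reserve is to interchange the order of summation directly and compute the multiplicity $w_m$ with which each $a_m$ occurs in the double sum; since $\sum_{N < m \leqslant N_1} w_m = H(N_1 - N - H)$ while $\sum_{N < m \leqslant N_1} H = H(N_1 - N)$, one gets $\sum_{N < m \leqslant N_1}(H - w_m) = H^2$, giving the same estimate.
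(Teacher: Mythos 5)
Your argument is correct and is essentially the paper's own proof in different clothing: both realize $H\sum_{N < n \leqslant N_1} a_n$ as the sum of the $H$ translates $\sum_{N < n \leqslant N_1-H} a_{n+h}$ up to boundary contributions of total size at most $H^2$, then divide by $H$. The paper organizes the bookkeeping by extending $a_n$ to a sequence $\alpha_n$ vanishing outside $(N,N_1]$ and re-splitting the range of summation over $n$, whereas you compare each fixed-$h$ translate with the full sum directly via interval containment; the content is identical, and your per-shift accounting of the $h$ and $H-h$ lost endpoints is, if anything, the more transparent way to see the $H^2$ bound.
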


\begin{proof}
Define
$$\alpha_n := \begin{cases} a_n, & \textrm{if\ } N < n \leqslant N_1 \\ 0, & \textrm{otherwise}. \end{cases}$$
Then
\begin{eqnarray*}
   \sum_{N < n \leqslant N_1} a_n &=& \frac{1}{H} \sum_{h \leqslant H} \sum_{n \in \Z} \alpha_n = \frac{1}{H} \sum_{h \leqslant H} \sum_{n \in \Z} \alpha_{n+h} = \frac{1}{H} \sum_{h \leqslant H} \ \sum_{N-h < n \leqslant N_1-h} \alpha_{n+h} \\
   &=& \frac{1}{H} \sum_{n \leqslant N_1-1} \ \sum_{h \leqslant H} \alpha_{n+h} - \frac{1}{H} \sum_{n \leqslant N-1} \ \sum_{h \leqslant H} \alpha_{n+h} \\
   &=& \frac{1}{H} \sum_{n \leqslant N_1-H} \ \sum_{h \leqslant H} \alpha_{n+h} + \frac{1}{H} \sum_{N_1-H < n \leqslant N_1-1} \ \sum_{h \leqslant H} \alpha_{n+h} - \frac{1}{H} \sum_{n \leqslant N-1} \ \sum_{h \leqslant H} \alpha_{n+h} \\
   &=& \frac{1}{H} \sum_{N < n \leqslant N_1-H} \ \sum_{h \leqslant H} \alpha_{n+h} + \frac{1}{H} \sum_{N_1-H < n \leqslant N_1-1} \ \sum_{h \leqslant H} \alpha_{n+h} \\
\end{eqnarray*}
Since, in the first sum, $\alpha_{n+h} = a_{n+h}$, we get
$$\left | \sum_{N < n \leqslant N_1} a_n \right | \leqslant \frac{1}{H} \left | \sum_{N < n \leqslant N_1-H} \ \sum_{h \leqslant H} a_{n+h} \right | + \frac{H^2}{H}$$
as asserted. 
\end{proof}

\section{Proof of Theorem~\ref{eq:th}}

One may assume $N^{-k}< \lambda_k < 1$, otherwise
$$N \lambda_k^{2^{1-k}} + N \left( N^k \lambda_k \right)^{-2/d_k} > N.$$
Using \eqref{eq:Erdos} and Lemma~\ref{lem:Weyl}, we get for any $H,L \in \Z_{\geqslant 1}$ such that $H < N$
\begin{eqnarray*}
   \sum_{N < n \leqslant 2N} \psi \left( f(n) \right) & \ll & \frac{N}{L} + \sum_{\ell \leqslant L} \frac{1}{\ell} \left | \sum_{N < n \leqslant 2N} e \left( \ell f(n) \right) \right | \\
   & \ll & \frac{N}{L} + \sum_{\ell \leqslant L} \frac{1}{\ell} \left\lbrace \frac{1}{H} \sum_{N < n \leqslant 2N-H} \left | \sum_{h \leqslant H} e \left( \ell f(n+h)\right) \right | + H \right\rbrace \\
   & \ll & \frac{N}{L} + \frac{1}{H} \sum_{N < n \leqslant 2N-H} \; \sum_{\ell \leqslant L} \frac{1}{\ell} \left | \sum_{h \leqslant H} e \left( \ell f(n+h)\right) \right | + H \log L \\
   & \ll & \frac{N}{L}+ \frac{1}{H} \sum_{N < n \leqslant 2N} \sum_{\ell \leqslant L} \frac{1}{\ell} \left | \sum_{h \leqslant H} e \left( \ell g_n(h) + \ell r_{n,k}(h) \right) \right | + H \log L
\end{eqnarray*}
with $g_n(h) := \frac{1}{k!}h^k f^{(k)} (n) + \dotsb + h f^{\,\prime }(n)$ and
$$r_{n,k}(h) := \frac{1}{k!} \int_n^{n+h} \left( n+h-t\right)^k f^{(k+1)} (t) \, \textrm{d}t.$$
Note that
$$\sum_{h \leqslant H-1} \left | r_{n,k}(h+1) - r_{n,k}(h) \right | < \frac{c_{k+1}}{(k+1)!} H^{k+1} \lambda_{k+1} = \frac{c_{k+1}}{s_k(k+1)!} H^{k+1} N^{-1}\lambda_{k}$$
so that, by partial summation
$$\sum_{N < n \leqslant 2N} \psi \left( f(n) \right) \ll \frac{N}{L} + \frac{1}{H} \sum_{N < n \leqslant 2N} \left\{ \sum_{\ell \leqslant L} \left( \frac{1}{\ell} + H^{k+1}N^{-1}\lambda_{k} \right) \underset{H_1\leqslant H}{\max} \left| \sum_{h \leqslant H_1} e\left( \ell g_n(h) \right) \right| \right\} + H \log L.$$
Now assume
\begin{equation}
   L \leqslant H^{-k-1} N \lambda_{k}^{-1} \label{eq:hyp}
\end{equation}
so that
\begin{equation}
   \sum_{N < n \leqslant 2N} \psi \left( f(n) \right) \ll \frac{N}{L} + \frac{1}{H}\sum_{N < n \leqslant 2N} \mathcal{S}_{H,L} (n) + H \log L \label{eq:firstbound}
\end{equation}
where
$$\mathcal{S}_{H,L} (n):= \sum_{\ell \leqslant L} \frac{1}{\ell} \, \underset{H_1\leqslant H}{\max} \left| \sum_{h \leqslant H_1} e\left( \ell g_n(h) \right) \right|$$
and set $\alpha_{n,k} := \frac{1}{k!} \left | f^{(k)} (n) \right |$. From Weyl's bound \cite[p. 42]{mon}, we get for any $\varepsilon > 0$
\begin{eqnarray*}
   & & \mathcal{S}_{H,L} (n) \\
   & \ll & \sum_{\ell \leqslant L} \frac{1}{\ell} \, \underset{H_1\leqslant H}{\max} \left( H_1^{2^{k-1}-1} + H_1^{2^{k-1}-k+\varepsilon} \sum_{h \leqslant k!H_1^{k-1}} \min \left( H_1 , \frac{1}{\| h \ell \alpha_{n,k} \|} \right) \right)^{2^{1-k}} \\
   & \ll & \sum_{\ell \leqslant L} \frac{1}{\ell} \left( H^{2^{k-1}-1} + H^{2^{k-1}-k+\varepsilon} \sum_{h \leqslant k!H^{k-1}} \min \left( H , \frac{1}{\| h \ell \alpha_{n,k} \|} \right) \right)^{2^{1-k}} \\
 \\
  & \ll & H^{1-2^{1-k}} \log L + H^{1 - k2^{1-k}+\varepsilon} \sum_{\ell \leqslant L} \frac{1}{\ell} \left( \sum_{h \leqslant k!H^{k-1}} \min \left( H , \frac{1}{\| h \ell \alpha_{n,k} \|} \right) \right)^{2^{1-k}} \\
\end{eqnarray*}
and H\"{o}lder's inequality applied with exponent $\frac{2^k}{2^k-2}$ yields
\begin{eqnarray*}
  & & \mathcal{S}_{H,L} (n) \\
  & \ll & H^{1-2^{1-k}} \log L + H^{1 - k2^{1-k}+\varepsilon} \left( \sum_{\ell \leqslant L} \frac{1}{\ell} \right)^{1-2^{1-k}} \left( \sum_{\ell \leqslant L} \frac{1}{\ell} \sum_{h \leqslant k!H^{k-1}} \min \left( H , \frac{1}{\| h \ell \alpha_{n,k} \|} \right) \right)^{2^{1-k}} \\
  & \ll & H^{1-2^{1-k}} \log L + H^{1 - k2^{1-k}+\varepsilon} (\log L)^{1-2^{1-k}} \left( \sum_{m \leqslant k!LH^{k-1}} \min \left( H , \frac{1}{\| m \alpha_{n,k} \|} \right) \sum_{\substack{\ell \mid m \\ \ell \leqslant L \\ m/\ell \leqslant k!H^{k-1}}} \frac{1}{\ell} \right)^{2^{1-k}} \\
  & \ll & H^{1-2^{1-k}} \log L \\
  & & {} + H^{1 - k2^{1-k}+\varepsilon} (\log L)^{1-2^{1-k}} \left( \sum_{j=0}^{k!L-1} \sum_{jH^{k-1} < m \leqslant (j+1)H^{k-1}} \min \left( H , \frac{1}{\| m \alpha_{n,k} \|} \right) \sum_{\substack{\ell \mid m \\ \ell \leqslant L \\ m/\ell \leqslant k!H^{k-1}}} \frac{1}{\ell} \right)^{2^{1-k}}.
\end{eqnarray*}
Following \cite[(13),(14)]{pop} (see also \cite{fom}), notice that, in the innersum, we have 
$$\frac{1}{\ell} \leqslant \frac{k!H^{k-1}}{m} < \frac{k!}{j} \quad \left( j \geqslant 1 \right)$$
so that
\begin{eqnarray*}
   (HL)^{-\varepsilon} \mathcal{S}_{H,L} (n) & \ll & H^{1-2^{1-k}}  + H^{1 - k2^{1-k}} \left( \sum_{m \leqslant H^{k-1}} \min \left( H , \frac{1}{\| m \alpha_{n,k} \|} \right) \frac{\sigma(m)}{m} \right)^{2^{1-k}} \\
   & &  + {} H^{1 - k2^{1-k}}\left( \sum_{j=1}^{k!L-1} \frac{1}{j} \sum_{jH^{k-1} < m \leqslant (j+1)H^{k-1}} \tau(m) \min \left( H , \frac{1}{\| m \alpha_{n,k} \|} \right) \right)^{2^{1-k}}
\end{eqnarray*}
and Lemma~\ref{lem:sum} and the crude bounds $\sigma(m) \ll m \log \log m$ and $\tau(m) \ll m^{\varepsilon}$ imply that
\begin{eqnarray*}
   \left( HL \right)^{-\varepsilon} \mathcal{S}_{H,L} (n)  & \ll & H^{1-2^{1-k}}  + H^{1 - k2^{1-k}} \left( H^k \alpha_{n,k} + H^{k-1} + \alpha_{n,k}^{-1} \right)^{2^{1-k}} \\
   & \ll & H \alpha_{n,k}^{2^{1-k}} + H^{1-2^{1-k}} + H^{1 - k2^{1-k}} \alpha_{n,k}^{-2^{1-k}}.
\end{eqnarray*}
Inserting in \eqref{eq:firstbound} and using $\alpha_{n,k} \asymp \lambda_k$, we get
\begin{eqnarray*}
   \sum_{N < n \leqslant 2N} \psi \left( f(n) \right) & \ll & \frac{N}{L} + \frac{\left( HL \right)^{\varepsilon}}{H}\sum_{N < n \leqslant 2N} \left( H \alpha_{n,k}^{2^{1-k}} + H^{1-2^{1-k}} + H^{1 - k2^{1-k}} \alpha_{n,k}^{-2^{1-k}} \right) + H \log L \\
   & \ll & \frac{N}{L} + \left( HL \right)^{\varepsilon} \left( N \lambda_k^{2^{1-k}} + NH^{-2^{1-k}} + NH^{-k2^{1-k}} \lambda_k^{-2^{1-k}} \right) + H \log L.
\end{eqnarray*}
Considering \eqref{eq:hyp}, the choice of $L =  \left \lfloor H^{-k-1} N \lambda_{k}^{-1} \right \rfloor$ gives
$$\sum_{N < n \leqslant 2N} \psi \left( f(n) \right) \ll H^{k+1} \lambda_k + \left( HN \lambda_{k}^{-1} \right)^{\varepsilon} \left( N \lambda_k^{2^{1-k}} + NH^{-2^{1-k}} + NH^{-k2^{1-k}} \lambda_k^{-2^{1-k}} \right) + H \log N.$$
The asserted result follows by choosing $H = 4\left \lfloor \left( N^{2^k} \lambda_k^{-(2^k+2)} \right)^{1/d_k} \right \rfloor$, the extra term $H \log N$ being absorbed by the term $N^{1+\varepsilon} \left( N^k \lambda_k \right)^{-2/d_k}$ since $N^k \lambda_k >1$. Note that this latter hypothesis also ensures that $1 < L < N^{k+1}$ and $H < N$, completing the proof.
\qed

\section{Extension to integer points close to smooth curves}

\noindent
In this section, let $\delta \in \left( 0,\frac{1}{4} \right] $, $N \in \Z_{\geqslant 2}$ large, $f: \left[  N,2N \right]  \longrightarrow \R$ be any function, and define
\begin{equation}
   \mathcal{R}(f,N,\delta):= \card \left\lbrace n \in \left[  N,2N \right] \cap \Z: \| f(n) \| < \delta \right\rbrace. \label{eq:R}
\end{equation}
Since it is known (see \cite[Exercise~4 p. 350]{borl} for instance) that, for any integer $1 \leqslant L \leqslant 1 + \left \lfloor (8 \delta)^{-1} \right \rfloor$, we have
$$\mathcal{R}(f,N,\delta) \ll \frac{N}{L} + \frac{1}{L} \sum_{\ell=1}^{L-1} \left | \sum_{N \leqslant n \leqslant 2N} e\left (\ell f(n) \right) \right|$$
the proof of Theorem~\ref{eq:th} may easily be adapted in a similar way to get a bound for $\mathcal{R}(f,N,\delta)$ of the same kind.

\begin{theorem}
\label{eq:th2}
Let $\delta \in \left( 0,\frac{1}{4} \right] $, $k,N \in \Z_{\geqslant 2}$, $f \in C^{k+1} \left[ N,2N \right]$ such that there exist $\lambda_k, \lambda_{k+1}, s_k > 0$ and $c_k,c_{k+1}\geqslant 1$ such that, for any $x \in \left[ N,2N \right]$ and any $j \in \{k,k+1\}$
$$\lambda_j \leqslant \left| f^{(j)}(x) \right| \leqslant c_j \lambda_j \quad \text{with} \quad \lambda_k= s_k N\lambda_{k+1}.$$
Define $d_k:=2^k(k+1) + 2k$. Then, for any $\varepsilon > 0$
$$\mathcal{R}(f,N,\delta) \ll_{k,\varepsilon} N \delta + N^{\varepsilon} \left( N \lambda_k^{2^{1-k}} + N \lambda_k^{\frac{2^{1-k}}{k}}\left( N^k \lambda_k  \right)^{-\frac{2}{k d_k}} + N \left( N^k \lambda_k \right)^{-\frac{2}{d_k}} \right).$$
\end{theorem}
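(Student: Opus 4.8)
The plan is to repeat the argument of Section~3 almost verbatim, the only genuine change being that the truncated Fourier inequality \eqref{eq:Erdos} is replaced by the inequality
\[
   \mathcal{R}(f,N,\delta) \ll \frac{N}{L} + \frac{1}{L} \sum_{\ell=1}^{L-1} \left| \sum_{N \leqslant n \leqslant 2N} e\left( \ell f(n) \right) \right|,
\]
valid for every integer $1 \leqslant L \leqslant 1 + \lfloor (8\delta)^{-1} \rfloor$, so that the weight $\tfrac{1}{\ell}$ carried by the exponential sums in Section~3 becomes the \emph{uniform} weight $\tfrac{1}{L}$. Exactly as before one may assume $N^{-k} < \lambda_k < 1$, hence $N^k \lambda_k > 1$, since otherwise the right-hand side already exceeds $N \geqslant \mathcal{R}(f,N,\delta)$.

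First I would apply Lemma~\ref{lem:Weyl} to each inner sum, expand $f(n+h) = g_n(h) + r_{n,k}(h)$, and carry out the same partial summation as in Section~3, using the identical bound for $\sum_h \left| r_{n,k}(h+1) - r_{n,k}(h) \right|$. Under hypothesis \eqref{eq:hyp}, i.e. $L \leqslant H^{-k-1} N \lambda_k^{-1}$, this gives
\[
   \mathcal{R}(f,N,\delta) \ll \frac{N}{L} + \frac{1}{H} \sum_{N < n \leqslant 2N} \mathcal{T}_{H,L}(n) + H, \qquad \mathcal{T}_{H,L}(n) := \frac{1}{L} \sum_{\ell \leqslant L} \underset{H_1 \leqslant H}{\max} \left| \sum_{h \leqslant H_1} e\left( \ell g_n(h) \right) \right|,
\]
the error $H$ (in place of $H \log L$) arising because $\tfrac{1}{L} \sum_{\ell < L} H < H$.

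The estimation of $\mathcal{T}_{H,L}(n)$ is in fact lighter than that of $\mathcal{S}_{H,L}(n)$, and Popov's device is not needed: the prefactor $\tfrac{1}{L}$ already performs the role it played in Section~3. After Weyl's bound I would apply H\"older's inequality with exponent $\tfrac{2^k}{2^k-2}$ relative to the uniform measure $\tfrac{1}{L}\sum_{\ell \leqslant L}$, which is simply the power-mean inequality $\tfrac{1}{L}\sum_\ell A_\ell^{2^{1-k}} \leqslant \big( \tfrac{1}{L}\sum_\ell A_\ell \big)^{2^{1-k}}$, where $A_\ell := \sum_{h \leqslant k! H^{k-1}} \min\big( H, \|h\ell\alpha_{n,k}\|^{-1} \big)$ and $\alpha_{n,k} \asymp \lambda_k$. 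Writing $m = h\ell$, bounding the number of representations of $m$ by $\tau(m) \ll m^\varepsilon$, and invoking Lemma~\ref{lem:sum} with length $k! L H^{k-1}$ gives
\[
   \frac{1}{L} \sum_{\ell \leqslant L} A_\ell \ll (HL)^{\varepsilon} \left( H^k \lambda_k + H^{k-1} + L^{-1} \lambda_k^{-1} \right),
\]
the term $L^{-1} H$ produced by Lemma~\ref{lem:sum} being absorbed into $H^{k-1}$ for $k \geqslant 2$. Substituting back yields
\[
   (HL)^{-\varepsilon} \mathcal{T}_{H,L}(n) \ll H \lambda_k^{2^{1-k}} + H^{1-2^{1-k}} + H^{1-k2^{1-k}} L^{-2^{1-k}} \lambda_k^{-2^{1-k}},
\]
which is the bound for $\mathcal{S}_{H,L}(n)$ from Section~3 up to the harmless extra factor $L^{-2^{1-k}} \leqslant 1$.

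Finally I would sum over $n$, discard the factor $L^{-2^{1-k}} \leqslant 1$, and take the same $H = 4\lfloor (N^{2^k} \lambda_k^{-(2^k+2)})^{1/d_k} \rfloor$ as in Theorem~\ref{eq:th} together with $L = \min\big( 1 + \lfloor (8\delta)^{-1}\rfloor,\ \lfloor H^{-k-1} N \lambda_k^{-1} \rfloor \big)$. For this value of $H$ the three exponential-sum terms reproduce the three terms of Theorem~\ref{eq:th}, the term $H$ is again swallowed by $N (N^k\lambda_k)^{-2/d_k}$, and
\[
   \frac{N}{L} = N \max\left( \big( 1 + \lfloor (8\delta)^{-1}\rfloor \big)^{-1},\ \lfloor H^{-k-1} N \lambda_k^{-1} \rfloor^{-1} \right) \ll N\delta + N \big( N^k \lambda_k \big)^{-2/d_k},
\]
using $\lfloor H^{-k-1}N\lambda_k^{-1}\rfloor \asymp (N^k\lambda_k)^{2/d_k}$. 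This is exactly the announced estimate. The only real point of care is the compatibility of the two constraints on $L$ --- the analytic one \eqref{eq:hyp} and the arithmetic one $L \leqslant 1 + \lfloor(8\delta)^{-1}\rfloor$ forced by $\delta$ --- which is precisely what the above minimum reconciles; the hypothesis $N^k\lambda_k > 1$ again guarantees $L \geqslant 1$, $H \geqslant 4$ and $H < N$.
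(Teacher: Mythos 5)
Your proof is correct, and since the paper disposes of Theorem~\ref{eq:th2} in a single sentence (``the proof of Theorem~\ref{eq:th} may easily be adapted''), you have actually supplied more detail than the author does. Your skeleton coincides with the intended one --- the counting inequality with uniform weight $\tfrac{1}{L}$, Weyl's shift (Lemma~\ref{lem:Weyl}), the Taylor splitting $f(n+h)=g_n(h)+r_{n,k}(h)$ with partial summation under \eqref{eq:hyp}, Weyl's bound, H\"older, the substitution $m=h\ell$, Lemma~\ref{lem:sum}, and the same final choice of $H$ --- but you deviate at one genuine point. The most literal adaptation of Section~3 would bound $\tfrac{1}{L}\sum_{\ell<L}$ by $\sum_{\ell\leqslant L}\tfrac{1}{\ell}$ and rerun the argument verbatim, Popov blocks included; Popov's device is there precisely to prevent the harmonic weight $\tfrac{1}{\ell}$ from degenerating into a lossy factor $L$ when Lemma~\ref{lem:sum} is applied over the full range $m\leqslant k!LH^{k-1}$. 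You observe, correctly, that with the uniform weight this rescue is superfluous: Jensen (your power-mean form of H\"older against the measure $\tfrac{1}{L}\sum_{\ell\leqslant L}$), the divisor bound $\sum_{h\ell=m}1\leqslant\tau(m)\ll m^{\varepsilon}$, and one application of Lemma~\ref{lem:sum} at length $k!LH^{k-1}$ give $\tfrac{1}{L}\sum_{\ell\leqslant L}A_\ell\ll (HL)^{\varepsilon}\left(H^k\lambda_k+H^{k-1}+L^{-1}\lambda_k^{-1}\right)$, the prefactor $\tfrac{1}{L}$ killing the length-$L$ blow-up automatically and even yielding the extra decay $L^{-2^{1-k}}$ on the third term, which you rightly discard so as to reuse the optimization of Theorem~\ref{eq:th} unchanged. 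Your endgame is also sound: $H^{-k-1}N\lambda_k^{-1}\asymp (N^k\lambda_k)^{2/d_k}$, so $L=\min\left(1+\lfloor(8\delta)^{-1}\rfloor,\lfloor H^{-k-1}N\lambda_k^{-1}\rfloor\right)$ reconciles the constraint \eqref{eq:hyp} with the $\delta$-constraint and gives $N/L\ll N\delta+N(N^k\lambda_k)^{-2/d_k}$, while the stray term $H$ (replacing $H\log L$) is absorbed as before. The only loose ends --- $L\geqslant 1$ and $H<N$ hold only up to constants depending on $k$, which is harmless because the bound is trivial when $N^k\lambda_k\ll_k 1$ --- are glossed over at exactly the same level of rigour in the paper's own proof of Theorem~\ref{eq:th}, so they do not count against you.
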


\noindent
This must be compared to the existing results of the theory. For instance, under the hypothesis $\left |f^{\, \prime \prime} \right| \asymp \lambda_2$, it is proved in \cite{bra} that
$$\mathcal{R}(f,N,\delta) \ll N \lambda_2^{1/3} +  N \delta + \left( \frac{\delta}{\lambda_2} \right)^{1/2} +  1.$$
In the cases $k=3$ or $k=4$, it is known from \cite{huxs2} that
$$\mathcal{R} \left (f,N,\delta\right) \ll N \lambda_3^{1/6} + N \delta^{2/3} + N \left( \delta^3 \lambda_3 \right)^{1/12} + \left( \frac{\delta}{\lambda_2} \right)^{1/2} + 1$$
if $\left |f^{(j)} \right| \asymp \lambda_j$ for $j \in \{2,3\}$ such that $\lambda_2 = N \lambda_3$, and  
$$\mathcal{R} \left (f,N,\delta\right) \ll N \lambda_4^{1/10} + N \delta^{1/3} + N \left( \delta^3 \lambda_4 \right)^{1/21} + \left( \frac{\delta}{\lambda_3} \right)^{1/3} + 1$$
if $\left |f^{(j)} \right| \asymp \lambda_j$ for $j \in \{3,4\}$ such that $\lambda_3 = N \lambda_4$.

\end{document}